\theoremstyle{plain}
\newtheorem{theorem}{Theorem}[section]
\newtheorem{lemma}[theorem]{Lemma}
\newtheorem{corollary}[theorem]{Corollary}
\theoremstyle{definition}
\theoremstyle{remark}
\newtheorem{remark}[theorem]{Remark}
\def\bin #1#2 {\left( \matrix { #1 \cr #2 \cr } \right) }
\begin{document}

\title[Projective curves not contained in  hypersurfaces of given degree]
{On the genus of  projective curves not contained in hypersurfaces
of given degree}

\author{Vincenzo Di Gennaro }
\address{Universit\`a di Roma \lq\lq Tor Vergata\rq\rq, Dipartimento di Matematica,
Via della Ricerca Scientifica, 00133 Roma, Italy.}
\email{digennar@mat.uniroma2.it}

\abstract Fix integers $r\geq 4$ and $i\geq 2$ (for $r=4$ assume
$i\geq 3$). Assuming that the rational number $s$ defined by the
equation $\binom{i+1}{2}s+(i+1)=\binom{r+i}{i}$ is an integer, we
prove an upper bound for the genus of a reduced and irreducible
complex projective curve in $\mathbb P^r$, of degree $d\gg s$, not
contained in hypersurfaces of degree $\leq i$. It turns out that
this bound coincides with the Castelnuovo's bound for a curve of
degree $d$ in $\mathbb P^{s+1}$. We prove that the bound  is sharp
if and only if there exists an integral surface $S\subset \mathbb
P^r$ of degree $s$, not contained in hypersurfaces of degree $\leq
i$. Such a surface, if existing, is necessarily the isomorphic
projection of a rational normal scroll surface of degree $s$ in
$\mathbb P^{s+1}$. The existence of such a surface $S$ is known for
$i=2$ and $i=3$. It follows that, when $i=2$ or $i=3$, the bound is
sharp, and the extremal curves are isomorphic projection in $\mathbb
P^r$ of  Castelnuovo's curves of degree $d$ in $\mathbb P^{s+1}$.

\bigskip\noindent {\it{Keywords}}: Projective curve. Castelnuovo-Halphen Theory.
Quadric and cubic hypersurfaces. Projection of a rational normal
scroll surface. Maximal rank.

\medskip\noindent {\it{MSC2010}}\,: Primary 14N15; Secondary 14N25;
14M05; 14J26; 14J70

\endabstract
\maketitle

\section{Introduction}
The aim of this note is to improve \cite[Proposition 1.3 and
Proposition 1.4]{DGnew}, and to simplify the proof of \cite[Theorem
1.2]{DGnew}. In fact, we are going to prove the following Theorem
\ref{Thmi}. We refer to Remark \ref{Rm1} below for some comments on
the claim.

\begin{theorem}\label{Thmi}
Fix integers $r\geq 4$ and $i\geq 2$ (for $r=4$ assume $i\geq 3$).
Assume that the rational number $s$ defined by the equation
\begin{equation}\label{defofs}
\binom{i+1}{2}s+(i+1)=\binom{r+i}{i}
\end{equation}
is an integer. Let $C\subset \mathbb P^r$ be a reduced and
irreducible complex curve, of degree $d$ and arithmetic genus
$p_a(C)$, not contained in a hypersurface of degree $\leq i$. Assume
$d\gg s$, and divide $d-1=ms+\epsilon$, $0\leq\epsilon\leq s-1$.
Then
\begin{equation}\label{Cast}
p_a(C)\leq \binom{m}{2}s+m\epsilon.
\end{equation}
Moreover, this bound is sharp if and only if there exists an
integral surface in $\mathbb P^r$ of degree $s$, not contained in
hypersurfaces of degree $\leq i$. In this case, every extremal curve
$D$ is not a.C.M., and it is contained in a flag $S\subset T\subset
\mathbb P^r$, where $S$ is a surface of degree $s$, uniquely
determined by $D$, not contained in hypersurfaces of degree $i$, and
$T$ is a hypersurface of $\mathbb P^r$ of degree $i+1$. Furthermore,
$S$ is the isomorphic projection in $\mathbb P^r$ of a rational
normal scroll surface $S'$ of degree $s$ in $\mathbb P^{s+1}$, and,
via this isomorphism, $D$ corresponds to a Castelnuovo's curve of
$S'$ of degree $d$.
\end{theorem}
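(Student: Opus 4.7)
The plan is to adapt the classical Castelnuovo--Halphen method to this setting, in which the effective ``ambient space'' for $C$ is morally a degree-$s$ surface in $\mathbb{P}^r$ that, with respect to degree-$i$ postulation, behaves like $\mathbb{P}^{s+1}$ itself; the relation (\ref{defofs}) is precisely what makes this numerical miracle possible.

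I would first take a general hyperplane section $\Gamma := C \cap H \subset H \cong \mathbb{P}^{r-1}$, a set of $d$ points in uniform position, and write $h_\Gamma$ for its Hilbert function and $\Delta h_\Gamma(j) := h_\Gamma(j) - h_\Gamma(j-1)$. Since $p_a(C) \leq \sum_{j \geq 0}\max\{0,\, d - h_\Gamma(j)\}$, the bound (\ref{Cast}) reduces to lower bounds on $h_\Gamma$. The hypothesis that $C$ lies in no hypersurface of degree $\leq i$, combined with (\ref{defofs}), should force $\Gamma$ to postulate no better than $d$ points on a rational normal scroll of degree $s$; concretely, one expects
$$\Delta h_\Gamma(j) \geq \min\{s,\, d - h_\Gamma(j-1)\} \quad \text{for all } j \geq i+1,$$
and summation yields (\ref{Cast}). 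This step parallels and slightly refines \cite[Proposition 1.3]{DGnew}.

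For the sharpness direction, equality in (\ref{Cast}) forces $h_\Gamma$ to be extremal, with $\Delta h_\Gamma(j) = s$ in a long middle range. A Castelnuovo-type lemma then forces $\Gamma$ to lie on a unique irreducible curve $\Phi_H \subset H$ of degree $s$. Varying $H$, the curves $\Phi_H$ patch into an irreducible surface $S \subset \mathbb{P}^r$ of degree $s$ containing $C$, and the extremality of $h_\Gamma$ in low degrees translates, via (\ref{defofs}), into $S$ not lying on any hypersurface of degree $\leq i$; a further dimension count then produces a hypersurface $T$ of degree $i+1$ with $S \subset T$. Finally, (\ref{defofs}) matches the Hilbert polynomial of $S$ with that of a rational normal scroll of degree $s$ in $\mathbb{P}^{s+1}$, so the classification of surfaces of minimal degree (del Pezzo--Bertini) identifies $S$ as the isomorphic projection to $\mathbb{P}^r$ of such a scroll $S'$. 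The curve $D$ then corresponds on $S'$ to a degree-$d$ curve realizing the Castelnuovo bound in $\mathbb{P}^{s+1}$, hence to a Castelnuovo curve.

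Conversely, given such an $S$, lift to $S' \subset \mathbb{P}^{s+1}$, pick a Castelnuovo curve $D' \subset S'$ of degree $d$, and project isomorphically to $D \subset S$. Since $d \gg s$, in particular $d > is$, any hypersurface of degree $\leq i$ containing $D$ would cut $S$ in a scheme of degree $\leq is < d$ and therefore contain all of $S$, contradicting the hypothesis on $S$. The main obstacle I foresee is the patching step in the sharpness direction: showing that the degree-$s$ curves $\Phi_H$ arising in each general hyperplane section fit together into a single irreducible surface $S$ with the asserted uniqueness and postulation properties. This should require a careful analysis of the graded components $H^0(\mathcal{I}_C(j))$ for $j \leq i+1$, together with Castelnuovo--Mumford regularity estimates for both $C$ and $S$.
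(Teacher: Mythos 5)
There is a genuine gap at the very first step, where you claim that the difference bound $\Delta h_\Gamma(j)\geq\min\{s,\,d-h_\Gamma(j-1)\}$ for $j\geq i+1$ ``by summation yields (\ref{Cast})''. The genus estimate $p_a(C)\leq\sum_{j\geq 1}\bigl(d-h_\Gamma(j)\bigr)^{+}$ requires lower bounds on $h_\Gamma(j)$ for \emph{all} $j$, and in the range $1\leq j\leq i-1$ no useful lower bound is available: trivially $h_\Gamma(j)\leq\binom{r-1+j}{j}$, and for these $j$ one has $\binom{r-1+j}{j}<js+1$ (already $h_\Gamma(1)=r<s+1$, since $s\geq 5>r-1$ here). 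This is not a technical nuisance you can argue away, because the extremal curves themselves attain these small values: for an extremal $D$, the hyperplane section $\Gamma$ spans only $\mathbb P^{r-1}$, so $\sum_j(d-h_\Gamma(j))^{+}$ exceeds $\binom{m}{2}s+m\epsilon$ by a positive constant (at least $s+1-r$), and hence no true lower bound on $h_\Gamma$ can make this sum equal to the right-hand side of (\ref{Cast}). The slack is absorbed only because extremal curves are \emph{not} a.C.M., i.e. the inequality $p_a\leq\sum_j(d-h_\Gamma(j))^{+}$ is strict for them; quantifying that strictness (controlling the $h^1(\mathcal I_C(j))$ corrections) is exactly the hard part of the older Hilbert-function approach of \cite{DGnew}, which this note was written to improve, and it is missing from your sketch. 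A secondary but related problem: your claimed difference bound for $j\geq i+1$ cannot come from the hypothesis passing to the hyperplane section, since for extremal curves $\Gamma$ \emph{does} lie on degree-$i$ hypersurfaces of $\mathbb P^{r-1}$ (the section $\Sigma$ of the surface $S$ does, even though $S$ does not); it would require the full lifting/monodromy machinery you defer to the end.

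The paper's actual proof avoids all of this with a short dichotomy. Either $C$ lies on no surface of degree $\leq s$, in which case the Halphen-type bound of \cite{CCD} gives $p_a(C)\leq\frac{d^2}{2(s+1)}+O(d)$, which for $d\gg s$ is strictly below $\binom{m}{2}s+m\epsilon$; or $C$ lies on an integral surface $S$ of degree exactly $s$ (degree $<s$ is excluded because such surfaces lie on degree-$i$ hypersurfaces, by \cite[(11)]{DGnew}), and $S$ inherits the property of not lying on degree-$\leq i$ hypersurfaces. The key Lemma then shows, by a cohomological computation driven by (\ref{defofs}), that $h^0(S,\mathcal O_S(1))=s+2$, so $S$ is an isomorphic projection of a rational normal scroll $S'\subset\mathbb P^{s+1}$; transporting $C$ to $S'$ reduces (\ref{Cast}) verbatim to the classical Castelnuovo bound in $\mathbb P^{s+1}$, and sharpness, uniqueness of $S$, the hypersurface $T$ of degree $i+1$, and the non-a.C.M. statement all follow by Bezout and short cohomological arguments. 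Your closing steps (del Pezzo--Bertini identification, the Bezout argument for the converse with $d>is$) do align with the paper, but the central genus bound as you propose to prove it does not go through.
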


\begin{remark}\label{Rm1}
$(i)$ For instance, if $i=2$, the number $s$ defined by the equation
(\ref{defofs}) is an integer if and only if $r$ is not divisible by
$3$ \cite[p. 484, line 12 from below ]{BE}. If $i=3$, the number $s$
is an integer if and only if $r=1,2,9,10,11,18,19,27,29$ modulo
$36$. When $r=4$, the number $s$ is an integer if and only if
$i=1,2,5,10$ modulo $12$. When $r=5$, the number $s$ is an integer
if and only if $i=1,2,7,11,13,17,22,23,26,$
$31,37,38,41,43,46,47,53,58$ modulo $60$.

\smallskip
$(ii)$ For the case $r=4$ and $i=2$, we refer to \cite[Theorem
1.1]{DGnew}.

\smallskip
$(iii)$ The bound appearing in (\ref{Cast}) is the Castelnuovo's
bound for a non-degenerate integral curve of degree $d$ in $\mathbb
P^{s+1}$ \cite[p. 87, Theorem (3.7)]{EH}.

\smallskip
$(iv)$ Recall that $h^0(\mathbb P^r,\mathcal O_{\mathbb
P^r}(i))=\binom{r+i}{i}$, and that, if $S'\subseteq \mathbb P^{s+1}$
is a rational normal scroll surface of degree $\sigma$, then
$h^0(S',\mathcal O_{S'}(i))=\binom{i+1}{2}\sigma+(i+1)$. Moreover,
notice that for an integral surface in $\mathbb P^r$ ($r\geq 4$) of
degree $\sigma$, not contained in a hypersurface of degree $\leq i$,
one has $\binom{i+1}{2}\sigma+(i+1)\geq\binom{r+i}{i}$
\cite[(11)]{DGnew}. When the number $s$ defined by (\ref{defofs}) is
not an integer, then one may define $s$ differently as the minimal
integer $\sigma$ such that
$\binom{i+1}{2}\sigma+(i+1)\geq\binom{r+i}{i}$ (compare with
\cite[(11)]{DGnew}). For instance, when $i=2$, we have
$3s+3=\binom{r+2}{2}$ if $r$ is not divisible by $3$, and
$3s+1=\binom{r+2}{2}$ otherwise. More generally, divide
$$
\binom{r+i}{i}-(i+1)=\alpha\binom{i+1}{2}+\beta, \quad 0\leq
\beta\leq \binom{i+1}{2}-1.
$$
Then one has $s=\alpha$ if $\beta=0$, and $s=\alpha+1$ if $\beta>0$.
We hope to give some information in the case $\beta>0$ in a
forthcoming paper.

\smallskip
$(v)$ Let $S$ be an integral surface in $\mathbb P^r$,  not
contained in hypersurfaces of degree $\leq i$, of degree $s$ given
by the equation (\ref{defofs}). In view of Lemma \ref{lemma} (see
below), $S$ is the isomorphic projection in $\mathbb P^r$ of a
rational normal scroll surface $S'\subset \mathbb P^{s+1}$ of degree
$s$. Therefore, since $h^0(\mathbb P^r, \mathcal I_S(i))=0$, the
restriction map $h^0(\mathbb P^r,\mathcal O_{\mathbb P^r}(i))\to
h^0(S,\mathcal O_{S}(i))$ is bijective (compare with $(iv)$ above).
It follows that $S$ is of maximal rank (compare with \cite[p. 482,
line 14 from below]{BE}). We deduce that {\it the existence of an
integral surface in $\mathbb P^r$, not contained in hypersurfaces of
degree $\leq i$, of degree $s$ given by the equation (\ref{defofs}),
is equivalent to the existence of a rational normal scroll surface
of degree $s$ in $\mathbb P^{s+1}$ which can be projected
isomorphically into $\mathbb P^r$ as a surface of maximal rank}. By
\cite[Theorem 2]{BE} one knows that, for $i=2$ and $i=3$, a general
projection in $\mathbb P^r$ of a smooth rational normal scroll
surface of degree $s$, is of maximal rank. Therefore, at least for
$i=2$ and $i=3$, the bound (\ref{Cast}) is sharp. We do not know
whether there are other values of $i$ for which this is true.

\smallskip
$(vi)$ Taking into account the proof of Theorem \ref{Thmi} (see
below), one may explicit the assumption $d\gg s$. In fact,  an
elementary computation, that we omit, proves that it suffices to
assume $d>179$ in the case $i=2$ and $r=5$, and
$$
d>\max\left\{\frac{2s}{r-2}\prod_{j=1}^{r-2}[(r-1)!s]^{\frac{1}{r-1-j}},\,\,
\frac{4s}{r-2}(s+1)^3\right\}
$$
otherwise (compare with \cite[Section $2$, $(v)$, $(vi)$,
$(vii)$]{DGnew}). Notice that, for $r\geq 4$ and $i\geq 2$, one has
$\frac{4s}{r-2}(s+1)^3\geq is$.
\end{remark}

In view of previous Remark \ref{Rm1}, $(v)$, we have:

\begin{corollary}
If $i=2$ or $i=3$, the bound (\ref{Cast}) is sharp.
\end{corollary}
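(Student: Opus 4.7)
The plan is to combine Theorem \ref{Thmi} with the observation in Remark \ref{Rm1}(v) and the existence result of Ballico--Ellia. By Theorem \ref{Thmi}, the sharpness of (\ref{Cast}) is equivalent to the existence of an integral surface $S \subset \mathbb P^r$ of degree $s$ (with $s$ given by (\ref{defofs})) not contained in any hypersurface of degree $\leq i$. Hence it is enough to exhibit, for $i=2$ and $i=3$, at least one such surface in $\mathbb P^r$.

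Next, I would invoke the reformulation given in Remark \ref{Rm1}(v): any candidate $S$ must be the isomorphic projection of a rational normal scroll surface $S' \subset \mathbb P^{s+1}$ of degree $s$, and the defining equality $\binom{i+1}{2}s+(i+1)=\binom{r+i}{i}$ together with $h^0(\mathbb P^r, \ic_S(i))=0$ forces the restriction map $H^0(\mathbb P^r, \oc_{\mathbb P^r}(i)) \to H^0(S, \oc_{S}(i))$ to be bijective, i.e.\ $S$ must be of maximal rank in degree $i$. Therefore the existence problem translates into the following: produce a smooth rational normal scroll $S' \subset \mathbb P^{s+1}$ of degree $s$ which admits an isomorphic projection into $\mathbb P^r$ whose image is of maximal rank in degree $i$.

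Finally, for $i=2$ and $i=3$ this is exactly the content of \cite[Theorem 2]{BE}, which asserts that a general projection in $\mathbb P^r$ of a smooth rational normal scroll surface of degree $s$ is of maximal rank. Applying this result furnishes the required surface $S$ and hence, via Theorem \ref{Thmi}, proves that the bound (\ref{Cast}) is sharp.

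There is essentially no obstacle here: the corollary is a formal consequence of Theorem \ref{Thmi} together with the Ballico--Ellia existence statement. The only subtlety is the bookkeeping check that the equality case in the inequality $\binom{i+1}{2}\sigma+(i+1)\geq \binom{r+i}{i}$ forces isomorphic projection and maximal rank simultaneously, but this has already been carried out in Remark \ref{Rm1}(v).
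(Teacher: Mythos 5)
Your proposal is correct and follows exactly the route the paper intends: Theorem \ref{Thmi} reduces sharpness to the existence of a degree-$s$ surface in $\mathbb P^r$ not lying on hypersurfaces of degree $\leq i$, and Remark \ref{Rm1}$(v)$ together with \cite[Theorem 2]{BE} supplies such a surface for $i=2,3$. This matches the paper's own (one-line) justification of the corollary.
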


\section{The proof of Theorem \ref{Thmi}}


\maketitle
\begin{lemma}\label{lemma}
Fix integers $r\geq 4$ and $i\geq 2$ (for $r=4$ assume $i\geq 3$).
Assume that the rational number $s$ defined by the equation
(\ref{defofs}) is an integer. Let $S\subset\mathbb P^r$ be an
integral surface of degree $s$,  not contained in a hypersurface of
$\mathbb P^r$ of degree $\leq i$. Then   $S$ is the isomorphic
projection in $\mathbb P^r$ of a rational normal scroll surface
$S'\subset \mathbb P^{s+1}$ of degree $s$.
\end{lemma}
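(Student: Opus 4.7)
The plan is to use the Castelnuovo-type upper bound on the number of sections of $\mathcal O_\Sigma(i)$ for non-degenerate integral surfaces $\Sigma$, together with its equality case, to force $S$ to be the isomorphic projection of a surface of minimal degree, and then to apply the Bertini--del~Pezzo classification.

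First, I would invoke the classical inequality
\[
h^0(\Sigma,\mathcal O_\Sigma(i))\le\binom{i+1}{2}\sigma+(i+1),
\]
valid for every non-degenerate integral surface $\Sigma\subset\mathbb P^N$ of degree $\sigma$ (the inequality underlying \cite[(11)]{DGnew} cited in Remark \ref{Rm1}$(iv)$). Applied to $\Sigma=S$, combined with the non-containment hypothesis $h^0(\mathcal I_S(i))=0$ (which gives $h^0(S,\mathcal O_S(i))\ge h_S(i)=\binom{r+i}{i}$) and the defining equation (\ref{defofs}) for $s$, this forces equality $h^0(S,\mathcal O_S(i))=\binom{i+1}{2}s+(i+1)$.

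Next, I would appeal to the classification of the equality case: a non-degenerate integral surface achieving equality in the above bound must be the isomorphic projection of an integral, non-degenerate surface $S'\subset\mathbb P^{s+1}$ of degree $s$, i.e., a surface of minimal degree. By the Bertini--del~Pezzo theorem, $S'$ is then either the Veronese surface in $\mathbb P^5$ (which happens only when $s=4$) or a rational normal scroll of degree $s$ in $\mathbb P^{s+1}$. The Veronese possibility is ruled out by a direct check with (\ref{defofs}): among the admissible pairs $(r,i)$ with $r\geq 4$, $i\geq 2$ (and $i\geq 3$ when $r=4$), the value $s=4$ occurs only for $r=4$, $i=2$, which is excluded. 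Hence $S'$ is a rational normal scroll, and $S$ is its isomorphic projection into $\mathbb P^r$, as claimed.

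The principal obstacle is the rigidity statement invoked in the second step: proving that equality in the $h^0$-bound forces $S$ to be the isomorphic projection of a minimal-degree surface. This should be established by analyzing the general hyperplane section $C=S\cap H\subset H\cong\mathbb P^{r-1}$: using the standard sequence $0\to\mathcal I_S(j-1)\to\mathcal I_S(j)\to\mathcal I_{C/H}(j)\to 0$, the curve $C$ inherits the analogous non-containment property and extremality; the equality case of the Castelnuovo bound for curves in $\mathbb P^{r-1}$ then forces $C$ to be a projected rational normal curve of degree $s$, and lifting this information via the complete linear system $|\mathcal O_{\tilde S}(1)|$ on the normalization $\tilde S$ of $S$ recovers the embedding of $S'$ in $\mathbb P^{s+1}$.
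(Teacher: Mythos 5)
Your first step is sound, and it is in fact the engine of the paper's own argument: since $h^0(\mathbb P^r,\mathcal I_S(i))=0$, the restriction map $H^0(\mathbb P^r,\mathcal O_{\mathbb P^r}(i))\to H^0(S,\mathcal O_S(i))$ is injective, and together with the slicing bound this forces $h^0(S,\mathcal O_S(i))=\binom{i+1}{2}s+(i+1)$. The genuine gap is in the rigidity step, which you rightly identify as the crux but whose proposed proof would not go through as described. First, the general hyperplane section $\Sigma=S\cap H$ does \emph{not} inherit the non-containment property: the paper's proof shows $h^0(\mathbb P^{r-1},\mathcal I_{\Sigma,\,\mathbb P^{r-1}}(i))=i+\binom{i}{2}s-\binom{r+i-1}{i-1}$, and this number is strictly positive for every admissible pair $(r,i)$; indeed the end of the proof of Theorem \ref{Thmi} exploits precisely the fact that $\Sigma$ \emph{does} lie on a degree-$i$ hypersurface while $S$ does not. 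Second, ``the equality case of the Castelnuovo bound'' is the wrong tool: Castelnuovo's theorem bounds the genus and its extremal objects are Castelnuovo curves, not projected rational normal curves; what is actually needed is equality in the elementary estimates $h^0(\Sigma,\mathcal O_\Sigma(1))\leq 1+s$ and $h^0(S,\mathcal O_S(1))\leq 1+h^0(\Sigma,\mathcal O_\Sigma(1))$, i.e.\ $h^0(S,\mathcal O_S(1))=s+2$. Third, passing to the normalization $\tilde S$ proves at best something about $\tilde S$, whereas the Lemma asserts that $S$ itself is an isomorphic projection.

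The missing step can be closed exactly along the lines your Step 1 sets up, and this is essentially what the paper does. From the sequences $0\to\mathcal O_S(j-1)\to\mathcal O_S(j)\to\mathcal O_\Sigma(j)\to 0$ one has $h^0(S,\mathcal O_S(i))\leq 1+\sum_{j=1}^{i}h^0(\Sigma,\mathcal O_\Sigma(j))\leq 1+\sum_{j=1}^{i}(1+js)=\binom{i+1}{2}s+(i+1)$; since the two ends are equal by your Step 1, every intermediate inequality is an equality, in particular $h^0(\Sigma,\mathcal O_\Sigma(1))=s+1$ and $h^0(S,\mathcal O_S(1))=s+2$. The complete system $\vert\mathcal O_S(1)\vert$ is very ample because it contains the very ample subsystem cut by the hyperplanes of $\mathbb P^r$, so it embeds $S$ as a nondegenerate surface of degree $s$ in $\mathbb P^{s+1}$, i.e.\ a surface of minimal degree; as $s\geq 5$ for all admissible $(r,i)$, the del Pezzo--Bertini classification excludes the Veronese and yields a rational normal scroll $S'$, of which $S$ is the isomorphic projection. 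The paper reaches the same equality $h^0(S,\mathcal O_S(1))=s+2$ by a slightly more roundabout route (tracking $h^1(\mathbb P^r,\mathcal I_S(i-1))$ and $h^0(\mathbb P^{r-1},\mathcal I_{\Sigma,\,\mathbb P^{r-1}}(i))$ against the numerical identity (\ref{defofs})); your outline, once repaired as above, is an equivalent and somewhat more direct version of the same argument, but as written the sketch of the decisive step is incorrect.
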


\begin{proof}
It suffices to prove that $h^0(S,\mathcal O_S(1))=s+2$ (notice that
$s\geq 5$).

Let $\Sigma\subset\mathbb P^{r-1}$ be a general hyperplane section
of $S$. For every $j\geq 1$ one has $h^0(\Sigma, \mathcal
O_{\Sigma}(j))\leq 1+js$ \cite[(10)]{DGnew}. Hence, from the natural
exact sequence $0\to \mathcal O_S(j-2)\to \mathcal O_S(j-1)\to
\mathcal O_{\Sigma}(j-1)\to 0$, we obtain $h^0(S,\mathcal
O_S(1))\leq s+2$, and:
$$
h^0(S,\mathcal O_S(i-1))\leq h^0(S,\mathcal O_S(1))+\sum_{j=2}^{i-1}
[1+js]\leq i+\binom{i}{2}s.
$$
We deduce that if $h^0(S,\mathcal O_S(i-1))= i+\binom{i}{2}s$, then
$h^0(S,\mathcal O_S(1))=s+2$. Therefore, it suffices to prove that
$h^0(S,\mathcal O_S(i-1))= i+\binom{i}{2}s$. Since $h^0(\mathbb
P^r,\mathcal I_S(i-1))=0$, from the natural exact sequence $0\to
\mathcal I_S(i-1)\to \mathcal O_{\mathbb P^r}(i-1)\to \mathcal
O_{S}(i-1)\to 0$, we get:
$$
h^1(\mathbb P^r,\mathcal I_S(i-1))=h^0(S,\mathcal
O_S(i-1))-\binom{r+i-1}{i-1}\leq i+\binom{i}{2}s-\binom{r+i-1}{i-1}.
$$
Hence, it suffices to prove that
\begin{equation}\label{itsuff}
h^1(\mathbb P^r,\mathcal
I_S(i-1))=i+\binom{i}{2}s-\binom{r+i-1}{i-1}.
\end{equation}

To this purpose notice that, since $h^0(\mathbb P^r,\mathcal
I_S(i))=0$, from the natural exact sequence $0\to \mathcal
I_S(i-1)\to \mathcal I_{S}(i)\to \mathcal I_{\Sigma,\,\mathbb
P^{r-1}}(i)\to 0$, we have:
\begin{equation}\label{itsuff2}
h^0(\mathbb P^{r-1},\mathcal I_{\Sigma,\,\mathbb P^{r-1}}(i))\leq
h^1(\mathbb P^{r},\mathcal I_{S}(i-1))\leq
i+\binom{i}{2}s-\binom{r+i-1}{i-1}.
\end{equation}
Now, from the natural exact sequence $0\to \mathcal
I_{\Sigma,\,\mathbb P^{r-1}}(i)\to \mathcal O_{\mathbb
P^{r-1}}(i)\to \mathcal O_{\Sigma}(i)\to 0$, we get:
$$
h^1(\mathbb P^{r-1},\mathcal I_{\Sigma,\,\mathbb P^{r-1}}(i))=
h^0(\mathbb P^{r-1},\mathcal I_{\Sigma,\,\mathbb
P^{r-1}}(i))-\binom{r+i-1}{r-1}+h^0(\Sigma,\mathcal O_{\Sigma}(i))
$$
$$
\leq i+\binom{i}{2}s-\binom{r+i-1}{i-1}-\binom{r+i-1}{r-1}+(1+is),
$$
and this number is equal to $0$ in view of the equation
(\ref{defofs}). It follows that:
$$
h^0(\mathbb P^{r-1},\mathcal I_{\Sigma,\,\mathbb
P^{r-1}}(i))=i+\binom{i}{2}s-\binom{r+i-1}{i-1}.
$$
And so from (\ref{itsuff2}) we obtain (\ref{itsuff}).
\end{proof}


\begin{proof}[Proof of Theorem \ref{Thmi}.] First we notice that $C$ cannot be contained in a surface of
degree $<s$, because every such a surface is contained in a
hypersurface of degree $i$ \cite[(11)]{DGnew}. On the other hand,
if $C$ is not contained in a surface of degree $<s+1$, then, since
$d\gg s$, by \cite[loc. cit.]{CCD} we have (compare with
\cite[Section $2$, $(v)$, $(vi)$, $(vii)$]{DGnew}):
$$
p_a(C)\leq \frac{d^2}{2(s+1)}+O(d)<
\frac{d^2}{2s}+O(d)=\binom{m}{2}s+m\epsilon.
$$

Therefore, in order to prove Theorem \ref{Thmi}, we may assume $C$
is contained in a surface $S$ of degree $s$, not contained in a
hypersurface of degree $\leq i$. By previous Lemma \ref{lemma}, we
know that $S$ is an isomorphic projection of a rational normal
scroll surface $S'\subset \mathbb P^{s+1}$. Hence, $C$ is isomorphic
to a curve $C'\subseteq S'$ of degree $d$, in particular
$p_a(C)=p_a(C')$. Since $d\gg s$, by Bezout's theorem $C'$ is
non-degenerate. Therefore, the bound (\ref{Cast}) for $p_a(C)$
follows from Castelnuovo's bound for $p_a(C')$ \cite[loc. cit.]{EH}.
Observe that previous argument shows also that if the bound
(\ref{Cast}) is sharp, then every extremal curve $D\subset \mathbb
P^r$ is contained in a surface $S\subset \mathbb P^r$ of of degree
$s$, not contained in a hypersurface of degree $\leq i$. Conversely,
if such a surface $S$ exists, with notations as before, let
$D'\subset S'$ be a Castelnuovo's curve  of degree $d$. Let
$D\subset S$ be the projection of $D'$ in $S$. Then $D$ is an
extremal curve, and so the bound (\ref{Cast}) is sharp. In fact,
$p_a(D)=p_a(D')$. Moreover, since $d>is$, $D$ cannot be contained in
a hypersurface of degree $i$ by Bezout's theorem (compare with
Remark \ref{Rm1}, $(vi)$). Now, let $D\subset \mathbb P^r$ be an
extremal curve. We just proved that $D$ is contained in a surface
$S$ of degree $s$, a fortiori not contained in a hypersurface of
degree $\leq i$. The surface $S$ is an isomorphic projection of a
rational normal scroll surface $S'\subset \mathbb P^{s+1}$, and, as
before,  via this isomorphism, $D$ corresponds to a Castelnuovo's
curve of $S'$. Since $d\gg s$, again by Bezout's theorem, the
surface $S$ of degree $s$ containing $D$ is unique. By
\cite[(11)]{DGnew} and the definition of $s$, it follows that $S$ is
contained in a hypersurface $T$ of degree $i+1$. Moreover, $D$
cannot be a.C.M.. Otherwise, $H^1(\mathbb P^r,\mathcal I_S(1))=0$
\cite[Section 2, $(vii)$]{DGnew}, and so the restriction map
$H^0(\mathbb P^r,\mathcal I_S(i))\to H^0(\mathbb P^{r-1},\mathcal
I_{\mathbb P^{r-1},\Sigma}(i))$ is onto ($\Sigma\,:=$ a general
hyperplane section of $S$). This is impossible, because, in view of
\cite[(10)]{DGnew}, $\Sigma$ is contained in a hypersurface of
degree $i$, and $S$ is not.
\end{proof}


\begin{thebibliography}{subsurfPq}


\bibitem{BE} Ballico, E. - Ellia, Ph.:
{\it On Projections of Ruled and Veronese Surfaces}, Journal of
Algebra, {\bf 121}, 477-487 (1989).


\bibitem{CCD} Chiantini, L. - Ciliberto, C. - Di Gennaro, V.: {\it The genus
of projective curves}, Duke Math. J., {\bf 70}(2), 229-245 (1993).



\bibitem{DGnew} Di Gennaro, V.: {\it The genus of curves in $\mathbb P^4$ and $\mathbb P^5$
not contained in quadrics}, Rendiconti del Circolo Matematico di
Palermo, Series 2 (2022), Online first articles, 29 July 2022,
https://doi.org/10.1007/s12215-022-00788-x


\bibitem{EH} Eisenbud, D. - Harris, J.: {\it Curves in Projective Space},
S\'em. Math. Sup. $85$ Les Presses de l'Universit\'e de Montr\'eal,
1982.


\end{thebibliography}
\end{document}